\documentclass[11pt, reqno, a4]{article}
\usepackage{amsmath,amssymb,amsfonts,amsthm}
\usepackage{color}

\usepackage[colorlinks=true,linkcolor=blue,citecolor=blue,urlcolor=blue,pdfborder={0 0 0}]{hyperref}
\usepackage{cleveref}

\usepackage{caption}
\usepackage{thmtools}

\usepackage{mathrsfs}

\crefname{theorem}{Theorem}{Theorems}
\crefname{thm}{Theorem}{Theorems}
\crefname{lemma}{Lemma}{Lemmas}
\crefname{lem}{Lemma}{Lemmas}
\crefname{remark}{Remark}{Remarks}
\crefname{prop}{Proposition}{Propositions}
\crefname{defn}{Definition}{Definitions}
\crefname{corollary}{Corollary}{Corollaries}
\crefname{conjecture}{Conjecture}{Conjectures}
\crefname{question}{Question}{Questions}
\crefname{chapter}{Chapter}{Chapters}
\crefname{section}{Section}{Sections}
\crefname{figure}{Figure}{Figures}

\theoremstyle{plain}
\newtheorem{thm}{Theorem}[section]

\newtheorem{lemma}[thm]{Lemma}

\newtheorem{corollary}[thm]{Corollary}

\newtheorem{question}[thm]{Question}
\theoremstyle{definition}

\theoremstyle{remark}
\newtheorem{remark}[thm]{Remark}

\numberwithin{equation}{section}


\newcommand{\E}{\mathbb E}

\newcommand{\R}{\mathbb R}
\newcommand{\Z}{\mathbb Z}
\newcommand{\N}{\mathbb N}







\usepackage[margin=1in]{geometry}
\usepackage{extarrows}
\usepackage{commath}
\usepackage{mathtools}
\newcommand{\eps}{\varepsilon}
\usepackage{bbm}
\usepackage{setspace}
\setstretch{1.1}
\usepackage{enumitem}
\usepackage{tikz}
\usepackage{bm}

\newcommand{\Aut}{\operatorname{Aut}}





\title{Statistical physics on a product of trees}
\author{Tom Hutchcroft\footnote{Statslab, DPMMS, University of Cambridge, and Trinity College, Cambridge}}

\begin{document}
\maketitle

\begin{abstract}
Let $G$ be the product of finitely many trees $T_1\times T_2 \times \cdots \times T_N$, each of which is regular with degree at least three. We consider Bernoulli bond percolation and the Ising model on this graph, giving a short proof that the model undergoes a second order phase transition with mean-field critical exponents in each case. The result concerning percolation recovers a result of Kozma (2013), while the result concerning the Ising model is new. 

We also present a new proof, using similar techniques, of a lemma of Schramm concerning the decay of the critical two-point function along a random walk, as well as some generalizations of this lemma.
\end{abstract}

\section{Introduction}

In \cite{kozma2011percolation}, Gady Kozma proved that the \emph{triangle condition} holds for critical Bernoulli bond percolation on the Cartesian product of two $3$-regular trees. (His result also follows from our recent work \cite{Hutchcroftnonunimodularperc}.) 
The triangle condition is a well-known signifier of mean-field critical behaviour, originally introduced by Aizenman and Newman \cite{MR762034}, and can be used to deduce that various critical exponents take their mean-field values. For example, it implies that the expected cluster volume at $p=p_c-\eps$ scales like $\eps^{-1}$ \cite{MR762034}, that the density of the infinite cluster at $p=p_c+\eps$ scales like $\eps$ \cite{MR1127713}, and that the probability at criticality that the cluster of the origin has volume at least $n$ scales like $n^{-1/2}$ \cite{MR1127713}. See e.g. \cite{heydenreich2015progress} and \cite[Sections 1.3 and 7]{Hutchcroftnonunimodularperc} for an overview. For background on percolation see e.g.\ \cite{grimmett2010percolation,LP:book,1707.00520}. 

 In this note, we give a short and elementary proof of (a slight generalisation of) his result, together with the existence of a non-uniqueness phase for percolation on the same graph,   using ideas similar to those used in \cite{Hutchcroft2016944,Hutchcroftnonunimodularperc,1709.10515}. 
Our proof uses only a few basic properties of percolation and is also applicable to e.g.\ the Ising model, see \cref{sec:Ising}. We denote by $\tau_p(x,y)$ the probability that $x$ is connected to $y$ in Bernoulli bond percolation with retention probability $p$. As usual, $p_c$ and $p_u$ denote the thresholds for the appearance of an infinite cluster and a unique infinite cluster respectively. 

\begin{thm}[Non-uniqueness]
\label{thm:pcpu}
Let $G = T_1 \times T_2 \times \cdots \times T_N$ be the Cartesian product of finitely many trees $T_i$, each of which is regular with some degree $k_i \geq 3$. 
Then $p_c(G)<p_u(G)$. 
\end{thm}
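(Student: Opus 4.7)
The plan is to bound $p_c(G)$ above and $p_u(G)$ below by strictly separated quantities; the case $N=1$ is trivial since $G = T_1$ is a tree with $p_u = 1 > 1/(k_1-1) = p_c$, so I focus on $N \geq 2$.

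For the lower bound on $p_u$, I would use the standard FKG-based criterion: if $p > p_u(G)$, then the almost-sure uniqueness of the infinite cluster combined with positive association gives
\[
\tau_p(0,x) \;\geq\; \theta(p)^2 \;>\; 0 \qquad \text{for every } x \in G.
\]
Hence to show $p \leq p_u(G)$ it suffices to produce a sequence $x_n \to \infty$ with $\tau_p(0,x_n) \to 0$. Setting $k_* := \min_i k_i$, and without loss of generality $k_1 = k_*$, I would take $x_n := (\gamma_n, 0_2, \ldots, 0_N)$ where $(\gamma_n)_{n \geq 0}$ is a geodesic ray in $T_1$ based at $0_1$, and prove that $\tau_p(0, x_n) \to 0$ for every $p < 1/(k_*-1)$. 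The underlying idea is that, since $T_1$ is a tree, every open path in $G$ from $0$ to $x_n$ must traverse each of the $n$ edges of the unique $T_1$-geodesic between them at least once; a BK-type inequality combined with a strong-Markov decomposition at these first-crossings yields a multiplicative recursion whose principal factor is $p(k_*-1) < 1$, modulated by a correction that captures transverse excursions in $T_2 \times \cdots \times T_N$ between successive crossings. This would give $p_u(G) \geq 1/(k_*-1)$.

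For the upper bound on $p_c(G)$, I observe that $p_c(G) \leq p_c(T_{i_*}) = 1/(k_*-1)$ by monotonicity under the inclusion of the $T_{i_*}$-slice (where $k_{i_*} = k_*$). To upgrade this to the strict inequality $p_c(G) < 1/(k_*-1)$, one can appeal to the strict monotonicity of $p_c$ under edge addition for transitive nonamenable graphs à la Aizenman--Grimmett, which applies since $G$ is transitive and nonamenable and strictly contains the $T_{i_*}$-slice. Combining this with the lower bound on $p_u$ gives $p_c(G) < p_u(G)$, as required.

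\emph{Main obstacle.} The technical heart of the argument is the decay estimate $\tau_p(0, x_n) \to 0$ for $p$ all the way up to $1/(k_*-1)$. In this range the transverse subgraph $T_2 \times \cdots \times T_N$ may itself be supercritical, so a naive susceptibility bound on the transverse-excursion correction is unavailable. I expect to handle this via the unimodular mass-transport and cluster-truncation techniques developed in the author's earlier works cited in the introduction, which make it possible to bound the correction in terms of two-point quantities on $G$ itself rather than on the transverse subgraph.
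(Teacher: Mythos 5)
Your overall architecture (upper-bound $p_c$, lower-bound $p_u$, separate the two) is reasonable in principle, but the technical heart of your argument --- the claim that $\tau_p(0,x_n)\to 0$ along a $T_1$-geodesic for \emph{every} $p<1/(k_*-1)$, hence $p_u(G)\geq 1/(k_*-1)$ --- is not proved, and I do not believe it can be established by the route you sketch. The ``multiplicative recursion with principal factor $p(k_*-1)$'' is the computation one would do on the tree $T_1$ alone; on the product, between successive crossings of the slabs $\{y: y_1=\gamma_j\}$ an open path may make arbitrarily long excursions in $T_2\times\cdots\times T_N$, and for $p$ near $1/(k_*-1)$ the transverse graph is deep in its supercritical (and typically non-uniqueness) phase, so there is no convergent series controlling these excursions. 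You acknowledge exactly this obstacle and defer it to ``mass-transport and cluster-truncation techniques'' from the cited works, but nothing in those works yields the quantitative bound $p_u\geq 1/(k_*-1)$; indeed that bound is far stronger than anything known (for $G=T_3^N$ with $N$ large it is doubtful: once $p$ exceeds $p_u(T_3^{N-1})$ the slab argument forces $\inf_x\tau_p(0,x)>0$ on $T_3^N$, so your claim would require $p_u(T_3^{M})\geq 1/2$ for every $M$, which there is no reason to expect). As written, the proposal therefore reduces the theorem to an unproved and possibly false intermediate statement.

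The paper avoids this entirely by not trying to identify an explicit threshold. It first proves the critical two-point bound $\tau_{p_c}(x,y)\leq\prod_i(k_i-1)^{-d_i(x_i,y_i)}$ by a Fekete/supermultiplicativity argument using only Harris-FKG and sharpness ($\chi_p<\infty$ for $p<p_c$); this makes the tilted susceptibility $\chi_{p_c,1/2}=\sum_x\tau_{p_c}(0,x)\Delta(0,x)^{1/2}$ finite (\cref{lem:chipc}). A BK-inequality perturbation argument (\cref{lem:opencondition}) shows that $\{p:\chi_{p,1/2}<\infty\}$ is open, so $\chi_{p,1/2}<\infty$ for some $p>p_c$; since uniqueness at $p$ would force $\inf_x\tau_p(0,x)\geq\theta(p)^2>0$ and hence $\chi_{p,1/2}=\infty$, this gives $p_c<p\leq p_u$. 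Note that all you actually need is $\tau_p(0,x)\to 0$ along \emph{some} sequence for \emph{some} $p>p_c$ --- your FKG uniqueness criterion is the right one, but aiming for the full range $p<1/(k_*-1)$ is both unnecessary and the source of the difficulty. If you want to salvage your outline, replace the explicit threshold with the tilted-susceptibility perturbation step; the rest of your framework (including the $p_c$ upper bound, which then becomes redundant) falls away.
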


We use $0$ to denote an arbitrarily chosen root vertex of $G$. We have not optimized the upper bound on $\nabla_{p_c}$ appearing below. 

\begin{thm}[Triangle condition]\label{thm:main}
Let $G = T_1 \times T_2 \times \cdots \times T_N$ be the Cartesian product of finitely many trees $T_i$, each of which is regular with some degree $k_i \geq 3$. 
Then 
\[\nabla_{p_c} := \sum_{x,y} \tau_{p_c}(0,x)\tau_{p_c}(x,y)\tau_{p_c}(y,0) \leq \prod_{i=1}^N \frac{(k_i-1)^3}{(\sqrt{k_i-1}-1)^6}
< \infty.\]
\end{thm}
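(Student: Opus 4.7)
The plan is to combine a product-form upper bound on the critical two-point function with an explicit factorization of the triangle sum over the tree components.

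I would first establish a pointwise comparison of the form
\[
\tau_{p_c}(x,y) \leq C \prod_{i=1}^N g_i\bigl(d_i(x_i, y_i)\bigr),
\]
where $d_i$ denotes graph distance in $T_i$ and each $g_i$ decays geometrically at rate $1/\sqrt{k_i-1}$, the reciprocal of the non-backtracking spectral radius on $T_{k_i}$. I would derive this by iterating a Simon--Lieb or BK-type inequality, advancing one tree coordinate at a time; the non-amenability of each factor (guaranteed by $k_i \geq 3$) is what drives the iteration to contract at the claimed rate. The technical input here comes from adapting the spectral / Mass-Transport framework of \cite{Hutchcroft2016944,Hutchcroftnonunimodularperc,1709.10515} to the product geometry.

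Given the product bound, the triangle factorizes across tree components:
\[
\nabla_{p_c} \leq C^3 \prod_{i=1}^N \nabla^{(i)}, \qquad \nabla^{(i)} := \sum_{u,v \in T_{k_i}} g_i\bigl(d(0,u)\bigr)\, g_i\bigl(d(u,v)\bigr)\, g_i\bigl(d(v,0)\bigr),
\]
since $\sum_{x,y \in G}$ splits as $\prod_i \sum_{x_i,y_i \in T_{k_i}}$ whenever the summand separates across coordinates.

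I would then evaluate each single-tree triangle $\nabla^{(i)}$ explicitly. Every triple $\{0, u, v\}$ in a tree has a unique median $c$, so the sum can be parametrized by $c$ and the three tripod distances $a = d(0,c)$, $b = d(c,u)$, $e = d(c,v)$, with the crucial identity $d(0,u) + d(u,v) + d(v,0) = 2(a+b+e)$. Summing first over $c$ at each distance $a$ from $0$ (using the branching count $k_i(k_i-1)^{a-1}$), and then over $(u,v)$ distributed in the $k_i-1$ branching subtrees of $c$ that point away from $0$, the computation reduces to a product of geometric series in the ratio $1/\sqrt{k_i-1}$. These sum to give $\nabla^{(i)} \leq (k_i-1)^3 / (\sqrt{k_i-1}-1)^6$.

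The main obstacle is the first step: establishing the product-form pointwise bound with the correct decay rate $1/\sqrt{k_i-1}$ per tree factor. This is the quantitatively delicate ingredient, since a naive rate like $(k_i-1)^{-1/2}$ per tree would in fact make the single-tree triangles divergent (ball growth $(k_i-1)^n$ cancels the decay), so the kernels $g_i$ must be chosen to encode the non-backtracking structure precisely. Once this bound is in place, the factorization and geometric-series evaluation proceed essentially automatically.
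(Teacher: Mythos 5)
Your overall architecture (a pointwise product-form bound on $\tau_{p_c}$, followed by factorization of the triangle sum over the tree coordinates) is reasonable, and the second half would go through mechanically once the first half is in place. But the central quantitative claim is wrong, and your own final paragraph contradicts it: you propose kernels $g_i$ decaying at rate $1/\sqrt{k_i-1}$, and then correctly observe that a rate of $(k_i-1)^{-1/2}$ per tree makes the single-tree triangle diverge --- these are the same number. Concretely, with $g_i(d)=(k_i-1)^{-d/2}$ your median identity $d(0,u)+d(u,v)+d(v,0)=2(a+b+e)$ gives a summand $(k_i-1)^{-(a+b+e)}$ which is exactly cancelled by the $(k_i-1)^{a+b+e}$ count of tripods with those arm lengths, so $\nabla^{(i)}=\sum_{a,b,e}1=\infty$. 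The rate $1/\sqrt{k_i-1}$ is precisely the borderline, and no subexponential correction ``encoding the non-backtracking structure'' can rescue it. What is actually true, and what the paper proves as \cref{thm:pcestimate}, is the much stronger bound $\tau_{p_c}(x,y)\leq\prod_{i=1}^N(k_i-1)^{-d_i(x_i,y_i)}$, i.e.\ decay at rate $(k_i-1)^{-1}$ per coordinate (an equality for a single tree, where $p_c=1/(k_i-1)$). With that rate your factorization and median computation would converge comfortably and would in fact yield a constant smaller than the one in the statement.

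The other gap is that you give no workable derivation of the pointwise bound: ``iterating a Simon--Lieb or BK-type inequality, advancing one tree coordinate at a time'' is not an argument, and it is unclear what finite-volume quantity such an iteration would contract against. The paper's route is elementary and worth knowing: $\tau_p(x,y)$ depends only on the distance vector $\mathbf{d}(x,y)$; connection probabilities along a geodesic are supermultiplicative by Harris--FKG, so Fekete's lemma gives $\nu_p(\mathbf{n})\leq\lim_{r\to\infty}\nu_p(r\mathbf{n})^{1/r}$; and sharpness of the phase transition ($\chi_p<\infty$ for $p<p_c$) forces this limit to be at most $\lim_{r\to\infty}|V_{r\mathbf{n}}(x)|^{-1/r}=\prod_{i=1}^N(k_i-1)^{-n_i}$, after which left-continuity of $p\mapsto\tau_p(x,y)$ passes the bound to $p_c$. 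Note also that the paper does not factorize the triangle coordinate-by-coordinate: it bounds $\nabla_p\leq\bigl(\chi_{p,1/2}\bigr)^3$ via the tilted susceptibility built from the modular function $\Delta$, which is where the constant $\prod_{i=1}^N(k_i-1)^3(\sqrt{k_i-1}-1)^{-6}$ comes from.
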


\cref{thm:pcpu,thm:main} both follow from the following estimate on critical connectivity probabilities.
 Given $x,y\in V$, we write $\mathbf{d}(x,y)=(d_i(x_i,y_i))_{i=1}^N$ for the vector of distances between the coordinates of $x$ and $y$. We also write $\bm{\delta}$ for the vector $\bm{\delta} = (\log(k_i-1))_{i=1}^N$.

\begin{thm}\label{thm:pcestimate} Let $G = T_1 \times T_2 \times \cdots \times T_N$ be the Cartesian product of finitely many trees $T_i$, each of which is regular with some degree $k_i \geq 3$. Then
\begin{equation}
\label{eq:pcestimate}
\tau_{p_c}(x,y) \leq \prod_{i=1}^N(k_i-1)^{-d_i(x_i,y_i)} = \exp\left[ - \bm{\delta} \cdot \mathbf{d}(x,y) \right].
\end{equation}
\end{thm}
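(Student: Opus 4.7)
My plan is to exploit the non-unimodularity of $G$ obtained by fixing an end $\xi_i \in \partial T_i$ in each tree. This gives horocycle functions $h_i$, a partial order on $V(G)$ with $x \preceq y$ iff $x_i$ lies on the geodesic from $y_i$ to $\xi_i$ for every $i$, and an end-fixing subgroup $\Gamma \leq \Aut(G)$ which acts transitively on $V(G)$ with modular function $\Delta(x, y) = \prod_i (k_i - 1)^{h_i(x) - h_i(y)}$. Crucially, the stabilizer of $x$ in $\Gamma$ acts transitively on each descendant sphere $A_{\mathbf n}(x) := \{y \succeq x : h_i(y_i) - h_i(x_i) = n_i\}$, which has size $\prod_i (k_i - 1)^{n_i}$. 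For general $y$, a reduction using the coordinate-wise meet $z = x \wedge y$ (so $d_i(x_i, y_i) = d_i(x_i, z_i) + d_i(z_i, y_i)$) together with a BK/separator argument over the cutsets $\{w : w_i = z_i\}$ should reduce \eqref{eq:pcestimate} to the descendant case $y \in A_{\mathbf n}(x)$. In that case, $\Gamma_x$-symmetry implies $\tau_{p_c}(x, y)$ is constant on $A_{\mathbf n}(x)$, so \eqref{eq:pcestimate} is equivalent to the key estimate
\[
\E\bigl[|A_{\mathbf n}(0) \cap K_0|\bigr] \;=\; \sum_{y \in A_{\mathbf n}(0)} \tau_{p_c}(0, y) \;\leq\; 1 \qquad (\mathbf n \in \Z_{\geq 0}^N).
\]

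For this key estimate I would use a tilted mass-transport argument. Since $G$ is nonamenable, clusters at $p_c$ are almost surely finite (by the Benjamini--Lyons--Peres--Schramm theorem), so each finite cluster $K$ has a $\preceq$-minimal vertex $T(K)$ (the ``top''), made unique via a $\Gamma$-equivariant horocycle-lex tiebreak. Applying the tilted mass transport principle with weight $\Delta$ to an appropriate ``top-tagged'' indicator --- transporting mass from each $y \in A_{\mathbf n}(0) \cap K_0$ to $T(K_0)$ --- should produce an identity of the shape
\[
\E\bigl[|A_{\mathbf n}(0) \cap K_0|\bigr] \;=\; \prod_i (k_i-1)^{n_i} \cdot \P\bigl(\text{event involving }T(K_0)\bigr),
\]
from which the desired bound $\leq 1$ follows because the right-hand factor is a probability.

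The main obstacle is making the mass-transport step yield an inequality with the sharp constant $1$ rather than merely an identity: the naive transport gives equalities, so the transported function must be chosen so that each descendant of $0$ that lies in the cluster is matched to the cluster top in exactly one way (not over- or under-counted). Identifying the correct canonical matching --- likely via a unique descending path from $T(K_0)$ through $0$ and onward to $y$, using the tree structure in each coordinate to single out a preferred witness --- is the delicate part, and requires carefully leveraging the fact that at $p_c$ the cluster is almost surely finite so that the top and the descending paths exist.
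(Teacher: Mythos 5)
Your reformulation of the descendant case is correct and does capture the content of the theorem: since $\tau_{p_c}$ is constant on $A_{\mathbf n}(x)$ and $|A_{\mathbf n}(x)|=\prod_i(k_i-1)^{n_i}$, the bound $\sum_{y\in A_{\mathbf n}(0)}\tau_{p_c}(0,y)\leq 1$ is indeed equivalent to \eqref{eq:pcestimate} for comparable pairs. But there are two genuine gaps. First, the reduction of general pairs to descendant pairs via BK over the cutsets $\{w:w_i=z_i\}$ cannot work as stated: each such cutset is an infinite slab $\{z_i\}\times\prod_{j\neq i}T_j$, and summing $\tau(x,w)\tau(w,y)$ over it introduces at least polynomial-in-$\mathbf d$ corrections (at best you would recover Kozma's weaker bound, not the sharp one), and is in any case circular since the slab vertices $w$ form general, not descendant, pairs with $x$ and $y$. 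The reduction is in fact unnecessary: for any given pair $(x,y)$ you may \emph{choose} the ends $\xi_i$ so that each $y_i$ is a descendant of $x_i$, whence $h_i(x_i,y_i)=\pm d_i(x_i,y_i)$ and the descendant-sphere estimate applied to this choice of ends gives \eqref{eq:pcestimate} directly.

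The more serious gap is the key estimate itself, which you explicitly leave unresolved. The inputs you propose --- almost-sure finiteness of critical clusters (BLPS) plus a tilted mass transport onto cluster tops --- are not sufficient. The natural tilted transports here are tautologies: taking $F(u,v)=\tau_{p_c}(u,v)\mathbbm{1}[v\in A_{\mathbf n}(u)]$, both sides of the tilted mass-transport identity equal $\prod_i(k_i-1)^{n_i}\tau_{p_c}(0,y_{\mathbf n})$, so no bound is extracted; and decorating the transport with the cluster top does not obviously produce the disjointness needed to bound one side by a probability (note also that a finite cluster generally has many $\preceq$-minimal elements, and an equivariant deterministic tiebreak among vertices with equal horocycle coordinates need not exist). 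Moreover, almost-sure finiteness of clusters is qualitatively too weak an input: it is compatible with $\E|K_0|=\infty$, which in fact holds at $p_c$ on these graphs, so no soft counting argument at $p_c$ alone can yield $\E|A_{\mathbf n}(0)\cap K_0|\leq 1$. What actually closes the gap is quantitative and works below $p_c$: by Harris--FKG the quantity $a_r:=\sum_{y\in A_{r\mathbf n}(0)}\tau_p(0,y)$ is supermultiplicative in $r$ (split the descendant sphere $A_{(r+\ell)\mathbf n}$ over intermediate vertices in $A_{r\mathbf n}$), so Fekete's lemma gives $a_1\leq\lim_r a_r^{1/r}$; sharpness of the phase transition gives $a_r\leq\chi_p<\infty$ for $p<p_c$, forcing $a_1\leq 1$ for all $p<p_c$; and left-continuity of $p\mapsto\tau_p(x,y)$ transfers this to $p=p_c$. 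This is essentially the paper's argument (carried out there with distance spheres $V_{r\mathbf n}$ and geodesics rather than descendant spheres, which also removes the need for ends altogether), and it replaces both BLPS and the mass transport.
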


\begin{remark}
The inequality \eqref{eq:pcestimate} is an equality in the case $N=1$. 
In \cite{kozma2011percolation}, Kozma proved the slightly weaker inequality
\vspace{-0.25cm}
\[
\tau_{p_c}(x,y) \preceq  \|\mathbf{d}(x,y)\|^3 \exp\left[ - \bm{\delta} \cdot \mathbf{d}(x,y) \right].
\]
in the case $N=2$, $d_1=d_2=3$.
\end{remark}

\begin{remark}
Kozma's proof relied upon an unpublished lemma of Schramm giving an upper bound on the probability that the two endpoints of a random walk are connected in critical percolation. We give a new proof of this estimate in \cref{sec:Schramm} using techniques similar to those used to prove \cref{thm:pcestimate}.
\end{remark}

\begin{remark}
The proof of \cref{thm:pcpu} yields explicit lower bounds on $p_u-p_c$. In particular, it shows that
\[
p_u-p_c \geq \frac{1-p_c}{2 \sum_{i=1}^N \sqrt{k_i-1}} \prod_{i=1}^N \frac{(\sqrt{k_i-1}-1)^2}{k_i-1}.
\]
Again, this constant has not been optimized.
\end{remark}

\begin{remark}
If one instead considers \emph{anisotropic} percolation, in which there is a different retention probability associated to each tree in the product $G=T_1\times T_2 \times \cdots \times T_N$, our proof shows that the estimate \eqref{eq:pcestimate} holds uniformly along the entire critical surface for the existence of an infinite cluster. It follows that the triangle sum is uniformly bounded on the existence critical surface and that the existence critical surface and the uniqueness critical surface are bounded away from each other.
\end{remark}

\begin{remark}
If $p_{c,\lambda}$ is defined as in \cite{Hutchcroftnonunimodularperc}, then the proof of \cref{thm:pcestimate} shows more generally that if $G$ is a connected, locally finite graph, and $\Gamma$ is a transitive nonunimodular subgroup of $\Aut(G)$ with modular function $\Delta$, then we have the bound
\[\tau_{p_{c,\lambda}}(x,y)=\tau_{p_{c,1-\lambda}}(x,y) \leq \Delta^{\lambda}(x,y)\]
for every $\lambda \in \R$ and $x,y \in V$. In particular, if  $G = T_1 \times T_2 \times \cdots \times T_N$ is a product of finitely many trees $T_i$, each of which is regular with some degree $k_i \geq 3$, and $\Gamma$ is the group of automorphisms of $G$ fixing some specified end of each tree $T_1,\ldots,T_N$, then it follows that 
\[\tau_{p_{c,\lambda}}(x,y) \leq \prod_{i=1}^N(k_i-1)^{-\max\{\lambda,1-\lambda\} d_i(x_i,y_i)} = \exp\left[ - \max\{\lambda,1-\lambda\}\; \bm{\delta} \cdot \mathbf{d}(x,y) \right] \qquad \]
for every $\lambda \in \R$ and $x,y \in V$, generalizing \cref{thm:pcestimate}. Using this estimate one can easily prove that $p_{c,\lambda}$ is a strictly increasing function of $\lambda$ on $(-\infty,1/2]$, verifying \cite[Conjecture 8.4]{Hutchcroftnonunimodularperc} for this example.
\end{remark}

The large amount of symmetry enjoyed by a product of trees would seem to make it an excellent example with which to develop a deeper understanding of percolation at the non-uniqueness threshold. 

\begin{question}
It follows from the work of Peres \cite{MR1770624} that there is not a unique infinite cluster at $p_u$ on $G=T^N$, where $T$ is a $k$-regular tree for some $k\geq 3$ and $N\geq 2$. Does the triangle condition hold at $p_u$ in this example? Do we have that
\[
\tau_{p_u}(x,y) \leq C \exp\left[ - \frac{1}{2}\bm{\delta} \cdot \mathbf{d}(x,y) \right]
\]
for some constant $C$? What if $N$ is large?
Might we even have that
\[
\tau_{p_u}(x,y) \leq C \|\mathbf{d}(x,y)\|^{-\gamma_N} \exp\left[ - \frac{1}{2}\bm{\delta} \cdot \mathbf{d}(x,y) \right]
\]
for some constant $C$ and some $\gamma_N >0$? 
\end{question}

\subsection{The Ising model}
\label{sec:Ising}

The proof of \cref{thm:main,thm:pcestimate} is not particularly specific to percolation: It relies only on the positive associativity  property (i.e.\ the Harris-FKG inequality), the monotonicity and left-continuity in $p$  of the random subgraph measures under consideration,  and on the sharpness of the phase transition (in the sense that the susceptibility is finite below $p_c$). As a result, it can also be applied immediately to the Ising model in the same setting (equivalently, the random cluster model with $q=2$), for which sharpness was established by Aizenman, Barsky, and Fernandez \cite{MR894398} (see also \cite{duminil2015new}). Here, the relevant signifier of mean-field behaviour at criticality is the convergence of the \emph{bubble diagram} rather than the triangle diagram.

We use $\langle \cdot \rangle_\beta$ to denote expectations with respect to the free-boundary-condition Ising model with inverse temperature $\beta$ (with no external field) and use $\langle \cdot \rangle_{\beta,h}$ to denote expectations with respect to the free-boundary-condition Ising model with inverse temperature $\beta$ and external field $h$. For background on the Ising model see e.g.\ \cite{1707.00520}.

\begin{thm}\label{thm:Isingmain}
Let $G = T_1 \times T_2 \times \cdots \times T_N$ be the Cartesian product of finitely many trees $T_i$, each of which is regular with some degree $k_i \geq 3$, and consider the ferromagnetic Ising model on $G$. 
Then
\[\langle \sigma_x \sigma_y \rangle_{\beta_c} \leq \prod_{i=1}^N(k_i-1)^{-d_i(x_i,y_i)} = \exp\left[ - \bm{\delta} \cdot \mathbf{d}(x,y) \right] \]
for every $x,y \in V$, and hence
\[B_{\beta_c} = \sum_{x} \langle \sigma_0 \sigma_x \rangle_{\beta_c}^2 \leq \prod_{i=1}^N \frac{(k_i-1)^2}{(\sqrt{k_i-1}-1)^4} < \infty.\]
\end{thm}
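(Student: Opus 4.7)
My plan is to follow the strategy emphasized in the paragraph introducing \cref{sec:Ising}: transplant the proof of \cref{thm:pcestimate} to the Ising model via its random-cluster (FK) representation, and then derive the bubble bound by straightforward summation. Under the Edwards--Sokal coupling, the spin two-point function with free boundary conditions can be written as $\langle \sigma_x \sigma_y \rangle_\beta = \phi^{0}_{p,2}[x \leftrightarrow y]$, where $\phi^{0}_{p,2}$ denotes the free-boundary random-cluster measure on $G$ at edge weight $p = 1 - e^{-2\beta}$ and cluster weight $q = 2$. This recasts the problem as one about a random subgraph model on $G$, to which the same techniques should apply.

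The three structural properties needed are all available here. Positive association (Harris--FKG) holds for the random-cluster model whenever $q \geq 1$; the connection probability $\phi^{0}_{p,2}[x \leftrightarrow y]$ is monotone nondecreasing and left-continuous in $p$; and sharpness of the Ising phase transition---the theorem of Aizenman--Barsky--Fernandez \cite{MR894398}, or alternatively Duminil-Copin--Tassion \cite{duminil2015new}---gives $\sum_y \langle \sigma_0 \sigma_y \rangle_\beta < \infty$ for every $\beta < \beta_c$. With these ingredients in hand, the argument behind \cref{thm:pcestimate} should go through verbatim with $\langle \sigma_x \sigma_y \rangle_\beta$ in place of $\tau_p(x, y)$ and $\beta$ in place of $p$, yielding the pointwise bound $\langle \sigma_x \sigma_y \rangle_{\beta_c} \leq \prod_i (k_i - 1)^{-d_i(x_i, y_i)}$.

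The bubble-diagram bound then follows by squaring and summing. Using the product structure of $G$,
\[
B_{\beta_c} \leq \sum_{x \in V} \prod_{i=1}^N (k_i - 1)^{-2 d_i(0_i, x_i)} = \prod_{i=1}^N \sum_{x_i \in T_i} (k_i - 1)^{-2 d_i(0_i, x_i)},
\]
and each one-variable sum is a convergent geometric series because the sphere of radius $d$ in a $k_i$-regular tree has $k_i(k_i - 1)^{d-1}$ vertices for $d \geq 1$, while the ratio $(k_i - 1)^{-1}$ is strictly less than $1$ for $k_i \geq 3$. Elementary manipulation then yields the explicit constant $(k_i - 1)^2/(\sqrt{k_i - 1} - 1)^4$ advertised in the theorem as a (slightly loose) upper bound on the one-variable sum.

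The main---and effectively the only---step that needs care is verifying that no part of the proof of \cref{thm:pcestimate} secretly exploits a feature of Bernoulli percolation beyond the three properties listed above (such as exact independence of edges). Given that the author explicitly draws attention to the generality of the argument, I expect this to be automatic, and the Ising result to follow with essentially no new work once the pointwise bound is in place.
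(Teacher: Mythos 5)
Your proposal is correct and matches the paper's approach: the paper gives no separate proof of \cref{thm:Isingmain}, but asserts precisely that the argument for \cref{thm:pcestimate} transfers to the Ising model (via the $q=2$ random-cluster representation) because it uses only positive association, monotonicity and left-continuity in the parameter, and sharpness in the sense of Aizenman--Barsky--Fern\'andez. The only cosmetic difference is in the bubble constant, which in the paper arises as the square of the tilted-susceptibility bound \eqref{eq:chipc} (the bubble analogue of \cref{lem:tilttotri}), whereas you sum the squared pointwise bound directly over spheres---this in fact gives a sharper constant, which, as you note, is dominated by the stated one.
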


The fact that the following corollary can be deduced from \cref{thm:Isingmain} is essentially contained in the papers \cite{MR588470,MR678000,MR894398,MR857063}; see also \cite[Section 4.2]{MR1833805}.  
We write `$f(x) \asymp g(x)$ as $x\nearrow x_0$' to mean that $\limsup_{x\uparrow x_0} f(x)/g(x)<\infty$ and $\liminf_{x\uparrow x_0} f(x)/g(x) >0$. The meaning of `$f(x)\asymp g(x)$ as $x \searrow x_0$' is similar. 

\begin{corollary}[Mean-field critical exponents]
\label{corollary:Isingcriticalexponents}
Let $G = T_1 \times T_2 \times \cdots \times T_N$ be the Cartesian product of finitely many trees $T_i$, each of which is regular with some degree $k_i \geq 3$, and consider the ferromagnetic Ising model on $G$. Then we have that
\vspace{0em}
\begingroup
\addtolength{\jot}{0.5em}
\begin{align}
\chi_\beta &:= \langle \sigma_0 \sigma_x \rangle_{\beta} \asymp (\beta_c-\beta)^{-1}  &\beta &\nearrow \beta_c
\label{exponent:Isingsusceptibility}
\\
M_{\beta_c,h}&:= \langle \sigma_0 \rangle_{\beta_c,h}  \hspace{0.045cm}\asymp h^{1/3} & h&\searrow 0
\label{exponent:Isingcritmag}
\\
\lim_{h\downarrow 0} M_{\beta,h} &:= \lim_{h\downarrow0}\langle\sigma_0\rangle_{\beta,h} \hspace{0.085cm}  \asymp (\beta-\beta_c)^{1/2} & \beta&\searrow \beta_c.
\label{exponent:Isingsupermag}
\end{align}
\endgroup
In particular, the spontaneous magnetization is continuous at $\beta_c$.
\end{corollary}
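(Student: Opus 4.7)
The corollary should follow from the bubble bound $B_{\beta_c}<\infty$ supplied by \cref{thm:Isingmain} via the classical ``bubble condition implies mean-field'' machinery for the Ising model developed in \cite{MR588470,MR678000,MR894398,MR857063} and reviewed in \cite[Section 4.2]{MR1833805}. I would present the argument as a short appeal to this literature, since none of the intervening steps are specific to the product-of-trees geometry once the bubble at $\beta_c$ has been controlled.

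For the susceptibility exponent \eqref{exponent:Isingsusceptibility}, the plan is to sandwich $\chi_\beta^{-1}$ between two linear functions of $\beta_c-\beta$ by integrating differential inequalities in $\beta$. The lower bound $\chi_\beta \geq c(\beta_c-\beta)^{-1}$ follows from a Griffiths-type inequality $d\chi_\beta/d\beta \leq C\chi_\beta^2$ valid on any bounded-degree graph, integrated using the sharpness statement $\chi_{\beta_c}=\infty$ of Aizenman--Barsky--Fernandez \cite{MR894398}. The matching upper bound $\chi_\beta \leq C(\beta_c-\beta)^{-1}$ uses the Aizenman--Graham-type inequality $d\chi_\beta/d\beta \geq c\chi_\beta^2/B_\beta$ together with the monotonicity $B_\beta \leq B_{\beta_c}$ from Griffiths' second inequality, again integrated from $\beta$ up to $\beta_c$.

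For the in-field magnetization at criticality \eqref{exponent:Isingcritmag} and for the spontaneous magnetization \eqref{exponent:Isingsupermag}, the plan is to invoke the Aizenman--Fernandez differential inequalities \cite{MR857063} relating $M_{\beta,h}$, its $h$-derivative, and the bubble $B_\beta$. At $\beta=\beta_c$ these integrate in $h$ to give $M_{\beta_c,h}\leq Ch^{1/3}$, and evaluating the relevant inequality at $h=0^+$ for $\beta>\beta_c$ yields the matching upper bound $M_\beta^{\ast}\leq C(\beta-\beta_c)^{1/2}$. The corresponding lower bounds---$M_{\beta_c,h}\geq ch^{1/3}$ and $M_\beta^{\ast}\geq c(\beta-\beta_c)^{1/2}$---are model-independent consequences of the Aizenman--Barsky--Fernandez sharpness theorem \cite{MR894398} and do not require the bubble condition at all. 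Continuity of $M^{\ast}$ at $\beta_c$ is then immediate from $M_\beta^{\ast}\asymp (\beta-\beta_c)^{1/2}\to 0$ as $\beta\downarrow\beta_c$.

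The main obstacle to executing this rigorously is not the exponents themselves but the standard passage from finite-volume differential inequalities---where the random-current representation is most transparent---to the infinite-volume free-boundary-condition Ising model on our graph $G$. On a bounded-degree transitive graph this passage is entirely routine: one derives each inequality on a finite box, takes the van Hove limit using FKG and Griffiths monotonicity along an exhaustion, and verifies that the integrations in $\beta$ and $h$ commute with the limit. No step requires anything beyond bounded degree and transitivity, so no new ingredients are needed beyond those already present in the cited literature.
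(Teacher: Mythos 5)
Your proposal matches the paper's treatment: the paper gives no self-contained argument for this corollary, stating only that the deduction from the bubble bound of \cref{thm:Isingmain} is ``essentially contained in'' exactly the references you cite (Aizenman, Aizenman--Fernandez, Aizenman--Barsky--Fernandez, and the review in \cite[Section 4.2]{MR1833805}). Your sketch of which differential inequalities are integrated, and of the routine finite-volume to infinite-volume passage on a bounded-degree transitive graph, is simply a more detailed expansion of the same appeal to the classical ``bubble condition implies mean-field'' machinery.
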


\begin{remark}
Unlike for percolation, it is not yet known that the spontaneous magnetization is continuous at $\beta_c$ for the Ising model on every transitive nonamenable graph, even in the unimodular case. (The method of Benjamini, Lyons, Peres, and Schramm \cite{BLPS99b} only implies that the free FK-Ising model does not have any infinite clusters at criticality.)  Previously, continuity of the spontaneous magnetization had been established for $\Z^d$ with $d\geq 2$ along with some other Euclidean lattices \cite{MR0010315,MR0051740,MR857063,MR3306602}, and for amenable quasi-transitive graphs of exponential growth \cite{1606.03763}.
\end{remark}
\begin{remark}
In recent work by Duminil-Copin, Tassion, and Raoufi \cite{1705.03104,1705.07978}, a general methodology has been established to prove exponential decay of connectivity probabilities for many subcritical models with the positive associativity property (e.g.\ the FK-random cluster model for $q\geq 1$). Michael Aizenman has recently announced a proof, using related methods, that these models also have \emph{finite susceptibility} in their subcritical phases. This will allow one to deduce analogues of \cref{thm:pcestimate} and \cref{thm:Isingmain} for these models on a product of trees via our methods. We believe that similar methods should also enable one to analyze e.g.\ Voronoi percolation in hyperbolic spaces (which, like trees, are distance-transitive), and plan to address this in future work.
\end{remark}

\section{Proof}

The most important input to the proof of \cref{thm:pcestimate} is that the phase transition is \emph{sharp}, i.e., that
\[
\chi_p := \sum_{x\in V} \tau_p(0,x) <\infty \qquad \text{ for every $p<p_c$.}
\]
This was originally proven for all transitive graphs  by Aizenman and Barsky~\cite{aizenman1987sharpness}. A beautiful new proof was recently obtained by Duminil-Copin and Tassion \cite{duminil2015new}.

We will also make crucial use of Fekete's Lemma \cite{MR1544613} in the following form: If $(a_n)_{n\geq0}$ is a sequence of positive real numbers satisfying the supermultiplicative estimate
$a_{n+m} \geq a_n a_m$, then 
\[ \lim_{n\to\infty} a_n^{1/n} = \sup_{n\geq1} a_n^{1/n} \in (0,\infty].\]
In particular, the limit on the left exists, and $a_n \leq (\lim_{m\to\infty} a_m^{1/m})^n$ for every $n\geq0$.

\begin{proof}[Proof of \cref{thm:pcestimate}]
Recall that for any two vertices $x, y\in V$, the connection probability $\tau_p(x,y)$ can be written as the supremum of the continuous functions $\tau_p^r(x,y)$ giving the probability that $x$ is connected to $y$ by a path of length at most $r$, so that $\tau_p(x,y)$ is lower-semicontinuous in $p$. Since $\tau_p(x,y)$ is an increasing function of $p$, it follows that it is left-continuous in $p$. Thus, it suffices to prove the claim for all $p<p_c$. 

Observe that for any two vertices $x$ and $y$ of $G$ and $p\in [0,1]$, the connection probability $\tau_p(x,y)$ depends only on $p$ and on the vector of distances $\mathbf{d}(x,y):=(d_i(x_i,y_i))_{i=1}^N$. (Indeed, the isomorphism class of the doubly-rooted graph $(G,x,y)$ depends only on this vector of distances.) For each vector of non-negative integers $\mathbf{n}=(n_i)_{i=1}^N$, we define 
\[
V_\mathbf{n}(x) = \left\{y\in V : \mathbf{d}(x,y)=\mathbf{n}\right\}
\]
and
\[
\nu_p(\mathbf{n}) = \tau_p(x,y) \qquad y\in V_\mathbf{n}(x).
\]
Given $m\geq 1$, we write $m \mathbf{n}= (mn_i)_{i=1}^N$. Observe that if $r,\ell\geq0$ then there exists $y,z\in V$ such that $\mathbf{d}(x,y)=r \mathbf{n}$,  $\mathbf{d}(y,z)=\ell \mathbf{n}$ and $\mathbf{d}(x,z)=(r+\ell)\mathbf{n}$. Indeed, simply choose $z \in V_{(r+\ell)\mathbf{n}}(x)$ arbitrarily and take $y_i$ to be the $r$th vertex on the geodesic in $T_i$ from $x_i$ to $z_i$ for each $1\leq i \leq N$. Thus, it follows from the Harris-FKG inequality that the submultiplicative estimate
\[\nu_p((r+\ell)\mathbf{n}) \geq \nu_p(r \mathbf{n})\nu_p(\ell \mathbf{n}) \]
holds for every $k,\ell \geq 0$. 
If $p<p_c$ then it follows by Fekete's Lemma that
\[
\nu_p(\mathbf{n})\leq \lim_{r\to\infty}\nu_p(r\mathbf{n})^{1/r} \leq \liminf_{r\to\infty} \left[\frac{\chi_p}{|V_{r\mathbf{n}}(x)|}\right]^{1/r} = \liminf_{k\to\infty} |V_{r\mathbf{n}}(x)|^{-1/r}.\]
The result now follows by observing that
\[
 |V_{r\mathbf{n}}(x)| \geq \prod_{i=1}^N(k_i-1)^{rn_i}
\]
and hence that
\[
\nu_p(\mathbf{n})\leq \lim_{r\to\infty} |V_{r\mathbf{n}}(x)|^{-1/r} = \prod_{i=1}^N(k_i-1)^{-n_i}. \qedhere
\]
\end{proof}

For each $1\leq i \leq N$, let $\xi_i$ be a fixed end of $T_i$. The \textbf{parent} of a vertex $x_i\in T_i$ is the unique neighbour of $x_i$ that is closer to $\xi_i$ than $x_i$ is. We call the other vertices of $x_i$ its \textbf{children}.
Given this information, we can partition each of the trees $T_i$ into \textbf{levels} $(L_{i,n})_{n\in \Z}$ such that for every $n\in \Z$, every vertex in $L_n$ has its parent in $L_{n+1}$ and its children in $L_{n-1}$. This partition is unique up to re-indexing. 
Let $h_i(x_i,y_i)$ denote the \textbf{height difference} between two vertices $x_i,y_i\in T_i$, so that $h_i(x_i,y_i)= k$ if and only if there exists $n\in \Z$ such that $x_i \in L_{i,n}$ and $y_i\in L_{i,n+k}$. Note that this definition does not depend on the choice of index used when defining the levels $L_{i,n}$. 
Define\footnote{$\Delta(x,y)$ is equal to the modular function of $G$ with respect to the group of automorphisms $\Gamma_1 \times \cdots \Gamma_N \subseteq \operatorname{Aut}(T_1\times\cdots\times T_n)$, where $\Gamma_i$ is the group of automorphisms of $T_i$ fixing the end $\xi_i$.}  $\Delta:V^2\to(0,\infty)$ by
 \[\Delta(x,y) = \prod_{i=1}^N(k_i-1)^{h_i(x_i,y_i)} = \exp\left[ \delta \cdot \mathbf{h}(x,y) \right].\]
 Note that $\Delta(x,z)=\Delta(x,y)\Delta(y,z)$ and that $\Delta(x,y)=\Delta(y,x)^{-1}$ for every three vertices $x,y,z$.
We define the \textbf{critically tilted susceptibility} to be
\[\chi_{p,1/2} = \sum_x \tau_p(0,x) \Delta(0,x)^{1/2}.\]

\cref{lem:tilttotri,lem:opencondition}, below, are special cases of \cite[Lemma 7.1 and Proposition 1.9]{Hutchcroftnonunimodularperc} respectively.
\begin{lemma}
\label{lem:tilttotri}
$\nabla_p \leq \bigl(\chi_{p,1/2}\bigr)^3$.
In particular, if $\chi_{p,1/2}$ is finite then so is $\nabla_p$.
\end{lemma}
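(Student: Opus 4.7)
The plan is to reinterpret the triangle diagram as the three-step return probability of an auxiliary Markov chain whose transition kernel has all row sums equal to $\chi_{p,1/2}$. I set $\tilde\tau(x,y) := \tau_p(x,y)\,\Delta(x,y)^{1/2}$. Since $\Delta(0,x)\Delta(x,y)\Delta(y,0)=1$, the three half-power factors multiply to one and
\[\nabla_p = \sum_{x,y}\tau_p(0,x)\tau_p(x,y)\tau_p(y,0) = \sum_{x,y}\tilde\tau(0,x)\tilde\tau(x,y)\tilde\tau(y,0).\]

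The main step, and in fact the only nontrivial ingredient, is the row-sum identity
\[\sum_y \tilde\tau(x,y) = \chi_{p,1/2} \qquad \text{for every } x \in V.\]
To prove it I use the group $\Gamma = \Gamma_1 \times \cdots \times \Gamma_N$ of automorphisms of $G$ fixing each end $\xi_i$, mentioned in the footnote defining $\Delta$: in each regular tree $T_i$ the end-fixing subgroup $\Gamma_i$ is already vertex-transitive (any two vertices can be identified using their rays to $\xi_i$), so $\Gamma$ acts transitively on $V$. Moreover, every element of $\Gamma_i$ shifts the height function on $T_i$ by an additive constant and therefore preserves the height difference $h_i$, so $\Gamma$ preserves $\Delta$; it preserves $\tau_p$ automatically. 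Given $x$, picking $\phi \in \Gamma$ with $\phi(0)=x$ and substituting $y=\phi(y')$ reduces $\sum_y \tilde\tau(x,y)$ to $\sum_{y'}\tilde\tau(0,y') = \chi_{p,1/2}$.

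If $\chi_{p,1/2}=\infty$ there is nothing to prove, so assume it is finite. Then the kernel $P(x,y) := \tilde\tau(x,y)/\chi_{p,1/2}$ is a stochastic Markov transition kernel on $V$, and the triangle sum factorises as
\[\nabla_p = \chi_{p,1/2}^3 \sum_{x,y} P(0,x)\,P(x,y)\,P(y,0) = \chi_{p,1/2}^3 \cdot (P^3)(0,0) \leq \chi_{p,1/2}^3,\]
where the last inequality is the trivial bound on a three-step return probability. Everything past the row-sum identity is automatic, so the whole proof hinges on leveraging the transitive action of the end-fixing subgroup to turn $\tilde\tau$ into a constant multiple of a Markov kernel.
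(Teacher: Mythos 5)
Your proof is correct, and it rests on the same two ingredients as the paper's: the cocycle identity $\Delta(0,x)\Delta(x,y)\Delta(y,0)=1$ and the fact that the tilted row sums $\sum_y \tau_p(x,y)\Delta(x,y)^{1/2}$ are independent of $x$ (which the paper uses implicitly in writing the factorized product as $(\chi_{p,1/2})^3$, and which you justify explicitly and correctly via the transitive end-fixing group $\Gamma_1\times\cdots\times\Gamma_N$). The packaging differs slightly: the paper bounds $\tau_p(y,0)\leq \sum_z \tau_p(y,z)\Delta(0,z)^{1/2}$ and then factorizes the resulting triple sum into a product of three tilted susceptibilities, whereas you keep an exact identity, normalize $\tilde\tau$ into a stochastic kernel $P$, and invoke $(P^3)(0,0)\leq 1$. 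Your route yields the marginally sharper statement $\nabla_p = \chi_{p,1/2}^3\,(P^3)(0,0)$, identifying the triangle diagram as a three-step return probability; the paper's version is shorter and avoids having to verify transitivity of the end-stabilizer, but both give the same bound with essentially the same mechanism.
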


\begin{proof}
We have that
\begin{align*}
\nabla_p = \sum_{x,y} \tau_p(0,x) \tau_p(x,y) \tau_p(y,0) \leq \sum_{x,y,z} \tau_p(0,x) \tau_p(x,y) \tau_p(y,z) \Delta(0,z)^{1/2}\end{align*}
and using the identity $\Delta(0,z)^{1/2}=\Delta(0,x)^{1/2}\Delta(x,y)^{1/2}\Delta(y,z)^{1/2}$ yields that
\[\nabla_p \leq \sum_x \tau_p(0,x)\Delta(0,x)^{1/2} \sum_y \tau_p(x,y)\Delta(x,y)^{1/2} \sum_z \tau_p(y,z) \Delta(y,z)^{1/2} = \left(\chi_{p,1/2}\right)^3.  \qedhere
\]
\end{proof}

\begin{lemma}
\label{lem:opencondition}
The set $\{p \in [0,1] : \chi_{p,1/2} <\infty \}$ is open in $[0,1]$.
\end{lemma}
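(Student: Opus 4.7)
The plan is to show that if $\chi_{p_0,1/2}<\infty$ for some $p_0\in[0,1)$, then $\chi_{p,1/2}$ remains finite on a half-open interval $[p_0,p_0+\eta)$ for some $\eta>0$. Since $\chi_{p,1/2}$ is manifestly nondecreasing in $p$, this one-sided statement forces the set in question to be a down-interval that is relatively open in $[0,1]$. To avoid convergence headaches, I will work throughout with the truncated two-point function $\tau_p^r(0,x)$, the probability that $0$ is connected to $x$ by an open path of length at most $r$, and with the finite sums
\[
f_r(p):=\sum_{x\in V}\tau_p^r(0,x)\,\Delta(0,x)^{1/2},
\]
which are polynomials in $p$ and satisfy $f_r(p)\uparrow \chi_{p,1/2}$ as $r\to\infty$ by monotone convergence.

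The heart of the argument will be the differential inequality $f_r'(p)\leq C\,f_r(p)^2$, where $C$ depends only on the degrees $k_1,\ldots,k_N$. I would obtain this by applying Russo's formula to $\tau_p^r(0,x)$ and then bounding each pivotal probability by the BK inequality: an edge $\{u,v\}$ pivotal for $\{0\stackrel{\leq r}{\leftrightarrow} x\}$ forces the disjoint occurrence of $\{0\stackrel{\leq r-1}{\leftrightarrow} u\}$ and $\{v\stackrel{\leq r-1}{\leftrightarrow} x\}$, or the same with $u,v$ swapped. Multiplying by $\Delta(0,x)^{1/2}$, summing over $x$ and edges, and invoking the factorization $\Delta(0,x)^{1/2}=\Delta(0,u)^{1/2}\Delta(u,v)^{1/2}\Delta(v,x)^{1/2}$ together with the uniform bound $\Delta(u,v)^{1/2}\leq \max_i\sqrt{k_i-1}$ for neighbours $u\sim v$, the right-hand side splits into a product of two shifted tilted susceptibilities. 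By transitivity of $G$ under the subgroup of $\prod_i \Aut(T_i)$ fixing each end $\xi_i$ (which preserves $\Delta$), each such shifted sum equals $f_{r-1}(p)\leq f_r(p)$. The remaining factor is the vertex degree $D=\sum_i k_i$, producing the constant $C=2D\max_i\sqrt{k_i-1}$.

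Once the differential inequality $f_r'\leq C f_r^2$ is in hand, the standard ODE comparison $-(1/f_r)'\leq C$ yields
\[
f_r(p)\leq \frac{f_r(p_0)}{1-C(p-p_0)\,f_r(p_0)}\leq \frac{\chi_{p_0,1/2}}{1-C(p-p_0)\,\chi_{p_0,1/2}}
\]
on $[p_0,\,p_0+1/(C\chi_{p_0,1/2}))$, uniformly in $r$. Letting $r\to\infty$ transfers this bound to $\chi_{p,1/2}$ on the same interval, completing the proof. The main subtlety I anticipate is keeping the BK/Russo bookkeeping clean and verifying that the shifted sums indeed collapse to copies of $f_{r-1}$; everything else is routine ODE comparison and monotone convergence, and working with $f_r$ rather than $\chi_{p,1/2}$ directly sidesteps the interchange of derivatives with infinite sums that would otherwise need justification.
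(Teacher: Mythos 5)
Your proof is correct, but it takes a genuinely different route from the paper's. The paper fixes $p$ with $\chi_{p,1/2}<\infty$ and performs a two-colour sprinkling: each edge is independently declared open with probability $p$ or blue with probability $\eps/(1-p)$, so that open-or-blue percolation is $(p+\eps)$-percolation; it then decomposes connections according to the exact number $i$ of blue edges on a witnessing path, uses the BK inequality to show that the corresponding tilted sums $\tilde\chi^i$ satisfy $\tilde\chi^{i+1}\leq \frac{\eps}{1-p}\,\tilde\chi^i\,\chi_{p,1/2}\sum_{z\sim 0}\Delta(0,z)^{1/2}$, and sums the resulting geometric series. You instead derive the Aizenman--Newman-type differential inequality $f_r'\leq C f_r^2$ for the truncated tilted susceptibilities via Russo's formula and BK, and conclude by ODE comparison. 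Both arguments hinge on the same two ingredients --- the BK inequality and the factorization $\Delta(0,x)^{1/2}=\Delta(0,u)^{1/2}\Delta(u,v)^{1/2}\Delta(v,x)^{1/2}$ together with invariance of $\Delta$ under the end-fixing transitive group --- and both produce a bound of the same shape, $\chi_{p_0+\eps,1/2}\leq \chi_{p_0,1/2}/(1-C\eps\,\chi_{p_0,1/2})$ (the paper's constant $\frac{1}{1-p}\sum_{z\sim 0}\Delta(0,z)^{1/2}$ is slightly sharper than your $2D\max_i\sqrt{k_i-1}$, but neither is optimized). What your route buys is that all analysis happens at the level of polynomials $f_r$ in $p$, so no infinite-series manipulations or left-continuity issues arise; the cost is the extra care needed to justify the pivotality decomposition for the length-truncated events (which you handle correctly: a shortest open path through a pivotal edge is self-avoiding, so its two halves are edge-disjoint witnesses of lengths summing to at most $r-1$). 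The paper's route avoids Russo's formula entirely and generalizes more readily to models where Russo-type differentiation is unavailable but a BK-type inequality for sprinkled configurations still holds. Your observation that monotonicity of $\chi_{p,1/2}$ in $p$ reduces openness to right-openness is also implicitly how the paper's one-sided estimate yields the lemma, so no gap there.
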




\begin{proof}
Let $p\in [0,1)$ be such that $\chi_{p,1/2}<\infty$, and let $0<\eps<1-p$.
Suppose each edge of $G$ is \textbf{open} with probability $p$ and, independently, \textbf{blue} with probability $\eps/(1-p)$. Note that the subgraph spanned by the open-or-blue edges is exactly $p+\eps$ percolation.
  Let $\tilde \tau^i(x,y)$ be the probability that $x$ and $y$ are connected by an open-or-blue path that crosses each edge at most once and contains exactly $i$ blue edges,
  and let 
\[\tilde \chi^i = \sum_x \tilde\tau^i(0,x) \Delta(0,x)^{1/2}\]
so that 
\[\tau_{p+\eps}(x,y) \leq \sum_{i \geq 0} \tilde \tau^i(x,y) \quad \text{ and hence } \quad 
\chi_{p+\eps,1/2} \leq \sum_{i \geq 0} \tilde \chi^i.\]
It follows from the BK inequality that
\[ \tilde \tau^{i+1}(0,x) \leq \frac{\eps}{1-p} \sum_y \tilde \tau^i(0,y) \sum_{z \sim y} \tau_p(z,x), \]
and hence that
\begin{align*}\tilde\chi^{i+1} &\leq \frac{\eps}{1-p} \sum_{y} \tilde \tau^i(0,y) \sum_{z\sim y} \sum_{x}\tau_p(z,x) \Delta(0,x)^{1/2}\\
&=  \frac{\eps}{1-p} \sum_{y} \tilde\tau^i(0,y) \Delta(0,y)^{1/2}  \sum_{z\sim y} \Delta(y,z)^{1/2} \sum_x \tau_p(z,x)  \Delta(z,x)^{1/2}  \\
& = \frac{\eps}{1-p}  \tilde \chi^i \chi_{p,1/2}\sum_{z\sim 0} \Delta(0,z)^{1/2}.
\end{align*}
Thus, it follows by induction that 
\[\tilde \chi^i \leq \chi_{p,1/2}\left( \frac{\eps}{1-p}  \chi_{p,1/2}\sum_{z\sim 0} \Delta(0,z)^{1/2} \right)^i,\]
and hence that
\begin{align*}\chi_{p+\eps,1/2} &\leq \frac{\chi_{p,1/2}}{1- \frac{\eps}{1-p} \chi_{p,1/2}\sum_{z\sim0} \Delta(0,z)^{1/2}} < \infty
\end{align*}
for every sufficiently small $\eps>0$, 
concluding the proof.
\end{proof}

\begin{lemma}
\label{lem:chipc}
$\chi_{p_c,1/2}<\infty$.
\end{lemma}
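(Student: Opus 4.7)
The plan is to combine \cref{thm:pcestimate} with a direct counting computation on each tree factor. Since both the bound $\tau_{p_c}(0,x)\leq \prod_i (k_i-1)^{-d_i(0_i,x_i)}$ and $\Delta(0,x)^{1/2} = \prod_i (k_i-1)^{h_i(0_i,x_i)/2}$ factor as products over the coordinate trees, and since the sum defining $\chi_{p_c,1/2}$ is over $V = T_1 \times \cdots \times T_N$, I would immediately reduce to a product of one-tree sums:
\[
\chi_{p_c,1/2} \;\leq\; \prod_{i=1}^N \sum_{x_i \in T_i} (k_i-1)^{-d_i(0_i,x_i) + h_i(0_i,x_i)/2}.
\]
It then suffices to bound each one-tree factor.

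For a single $k$-regular tree $T$ with distinguished end $\xi$, I would parameterize each vertex $x\in T$ by the pair $(\alpha,\beta)\in \Z_{\geq 0}^2$, where $\alpha$ is the height above $0$ of the common ancestor of $0$ and $x$ in the direction of $\xi$ and $\beta$ is the number of further steps from this common ancestor down to $x$; equivalently, $\alpha = (d(0,x)+h(0,x))/2$ and $\beta = (d(0,x)-h(0,x))/2$. In this parameterization the exponent simplifies to $-d + h/2 = -\alpha/2 - 3\beta/2$, and a short enumeration shows that the number of vertices realising a given $(\alpha,\beta)$ is at most $(k-1)^\beta$ (one has equality for $\alpha=0$, and the count $(k-2)(k-1)^{\beta-1}$ for $\alpha,\beta\geq 1$). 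The one-tree sum is therefore bounded by the product of two geometric series in $\alpha$ and $\beta$, each with common ratio $(k-1)^{-1/2}$, and these converge precisely because $k \geq 3$, yielding the bound $\bigl(\sqrt{k-1}/(\sqrt{k-1}-1)\bigr)^{2} = (k-1)/(\sqrt{k-1}-1)^2$ on each factor.

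Multiplying over $i$ gives $\chi_{p_c,1/2}\leq \prod_{i=1}^N (k_i-1)/(\sqrt{k_i-1}-1)^2 < \infty$, and when this is combined with \cref{lem:tilttotri} it also recovers the explicit constant on $\nabla_{p_c}$ stated in \cref{thm:main}. I do not anticipate any real obstacle here: the deep input (sharpness-based critical decay of $\tau_{p_c}$) has already been absorbed into \cref{thm:pcestimate}, and what remains is essentially a tree-counting identity. The only conceptual observation needed is that the tilt exponent $1/2$ is exactly the self-dual value that balances the exponential decay of $\tau_{p_c}$ against the exponential growth of the horocycles in $T_i$, ensuring that the two resulting geometric series have common ratio $(k_i-1)^{-1/2} < 1$ rather than $\geq 1$.
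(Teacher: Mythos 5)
Your proposal is correct and follows essentially the same route as the paper: after invoking \cref{thm:pcestimate}, the paper also stratifies each tree by the pair $(m,n)=((d+h)/2,(d-h)/2)$ (your $(\alpha,\beta)$), uses the same count of at most $(k_i-1)^{n}$ vertices per stratum, and sums the same two geometric series with ratio $(k_i-1)^{-1/2}$ to arrive at the identical bound $\chi_{p_c,1/2}\leq \prod_{i=1}^N (k_i-1)/(\sqrt{k_i-1}-1)^2$. No gaps.
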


\begin{proof}
Observe that for every $1\leq i \leq N$, 
$m\geq 0$ and $n\geq 0$
 we have that
\[|\{ x_i \in V_i : h_i(0,x_i) = m-n, |x_i| = m+n\}| = \left.\begin{cases} 
(k_i-1)^n & m=0,\, n\geq 0\\
1 & m \geq 1,\, n=0\\
(k_i-1)^{n-1}(k_i-2) &  m\geq 1,\, n\geq 1. \end{cases}\right\} \leq (k_i-1)^n\]
Given $\mathbf{m}=(m_i)_{i=1}^N,\mathbf{n}=(n_i)_{i=1}^N, \in \N^N$, define 
\[
V_{\mathbf{m},\mathbf{n}}= \{ x \in V : h_i(0,x_i) = m_i-n_i, |x_i| = m_i+n_i\text{ for all $i=1,\ldots,N$}\},
\]
so that $|V_{\mathbf{m},\mathbf{n}}|\leq \prod_{i=1}^N(k_i-1)^{n_i}$.
Thus, applying \cref{thm:pcestimate}, we can compute that
\begin{align}\chi_{p_c,1/2} &= \sum_{\mathbf{n}\in\N^N} \sum_{\mathbf{m}\in\N^N} \sum_{x\in V_{\mathbf{m},\mathbf{n}}}\tau_{p_c}(0,x) \prod_{i=1}^N(k_i-1)^{(m_i-n_i)/2} \leq \sum_{\mathbf{n}\in\N^N} \sum_{\mathbf{m}\in\N^N} \prod_{i=1}^N(k_i-1)^{-(m_i+n_i)/2}
\nonumber\\
&\leq \prod_{i=1}^N \frac{k_i-1}{(\sqrt{k_i-1}-1)^2}
<\infty 
\label{eq:chipc}
\end{align}
as claimed.
\end{proof}




\begin{proof}[Proof of \cref{thm:pcpu,thm:main}]
\cref{thm:pcpu} follows immediately from \cref{lem:chipc} and \cref{lem:opencondition}, while \cref{thm:main} follows immediately from \cref{lem:chipc} and \cref{lem:tilttotri}, and in particular the bound \eqref{eq:chipc}.
\end{proof}

\section{A pedestrian proof of Schramm's random walk lemma}
\label{sec:Schramm}

As mentioned in the introduction, Kozma's work \cite{kozma2011percolation} relied upon a lemma of Schramm, which states that if $G=(V,E)$ is a nonamenable transitive unimodular graph and $X$ is the simple random walk on $G$, then 
\begin{equation}
\label{eq:Schramm_original}
\E\left[
\tau_{p_c}(X_0,X_n)
\right] \leq \rho^n,
\end{equation}
where $\rho=\lim_{n\to\infty}p_{2n}(0,0)^{1/2n}$ is the spectral radius of $G$. (The proof of this lemma appears in \cite{kozma2011percolation}.) This shows in particular that connection probabilities for critical percolation on nonamenable Cayley graphs are exponentially small in the distance for at least some choices of vertices (this also follows from \cite[Theorem 1.2]{Hutchcroft2016944}). It is conjectured that they are exponentially small in the distance \emph{uniformly} over all pairs of vertices. 

Schramm's proof of \eqref{eq:Schramm_original} relies on an ingenious use of the mass-transport principle; unimodularity of $G$ and reversibility of $X$ are essential to the argument. In this section, we show that the following more general form of Schramm's Lemma, without these restriction. We also prove a quenched version of Schramm's Lemma that also applies to amenable non-Liouville graphs such as the lamplighter group. The proofs of these generalizations are  obtained from very simple supermultiplicativity considerations, following the same strategy as \cite{Hutchcroft2016944} and the proof of \cref{thm:pcestimate}, and do not use the mass-transport principle. 

\begin{thm} 
\label{thm:Schramm}
Let $G$ be a connected, locally finite graph, let $\Gamma \subseteq \Aut(G)$ be transitive, and let $X$ be a Markov process on $G$ whose transition probabilities $P(x,y)$ are invariant under the diagonal action of $\Gamma$ in the sense that $P(x,y) = P(\gamma x, \gamma y)$ for every $\gamma \in \Gamma$. Then 
\begin{equation}
\label{eq:Schramm_annealed}
\E\Bigl[\tau_{p_c}(X_0,X_n)\Bigr]
\leq \left(\lim_{m\to\infty} \biggl( \sup_{y\in V} P^m(x,y) \biggr)^{1/m}\right)^n
\end{equation}
for every $n\geq 1$.
If we have furthermore that $\E\left[d(X_0,X_1)\right] < \infty$, then
\begin{equation}
\label{eq:Schramm_quenched1}
\frac{1}{n}\E \Bigl[\log \tau_{p_c}(X_0,X_n)\Bigr]
 \leq  \left[\lim_{m\to\infty} \frac{1}{m}\sum_{y\in V} P^m(x,y) \log P^m(x,y)\right]
 \end{equation}
 for every $n\geq 1$, 
 and
 \begin{equation}
\label{eq:Schramm_quenched2}
\lim_{n\to\infty} \frac{1}{n}\log \tau_{p_c}(X_0,X_n)
= \lim_{n\to\infty}\frac{1}{n}\E \Bigl[\log \tau_{p_c}(X_0,X_n)\Bigr]
 \leq \lim_{n\to\infty} \frac{1}{n}\sum_{y\in V} P^n(x,y) \log P^n(x,y) 
 \end{equation}
 almost surely.
\end{thm}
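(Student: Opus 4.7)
The plan is to mirror the proof of \cref{thm:pcestimate}, with the random walk trajectory $(X_n)$ playing the role of the deterministic geodesic used there. Each of the three bounds will come from a supermultiplicativity-plus-Fekete argument applied to a suitable functional of $\tau_p(X_0, X_n)$, followed by passing $p \uparrow p_c$.

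For the annealed bound \eqref{eq:Schramm_annealed}, I would fix $p < p_c$ and work with $f_n := \E[\tau_p(X_0, X_n)]$, where $X_0 = x$. Applying Harris--FKG pointwise gives $\tau_p(X_0, X_{n+m}) \geq \tau_p(X_0, X_n)\tau_p(X_n, X_{n+m})$; taking expectations, conditioning on $X_n$, and using the Markov property of $X$ together with $\Gamma$-invariance of both $P$ and $\tau_p$ converts this into the supermultiplicative estimate $f_{n+m} \geq f_n f_m$. Combined with the trivial bound $f_n \leq \chi_p \cdot \sup_y P^n(x, y)$ (coming from $\chi_p < \infty$ by sharpness), Fekete's lemma forces $f_n \leq \rho^n$ with $\rho := \lim_m (\sup_y P^m(x, y))^{1/m}$, and \eqref{eq:Schramm_annealed} follows by passing $p \uparrow p_c$ via left-continuity of $\tau_p$ and monotone convergence, exactly as at the end of the proof of \cref{thm:pcestimate}.

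For the in-expectation bound \eqref{eq:Schramm_quenched1}, I would run the same argument with $h_n := \E[\log \tau_p(X_0, X_n)]$. The identical FKG-plus-$\Gamma$-invariance computation yields superadditivity $h_{n+m} \geq h_n + h_m$, and the Gibbs inequality (non-negativity of Kullback--Leibler divergence), applied to the probability measure $P^n(x, \cdot)$ and the sub-probability measure $\tau_p(x, \cdot)/\chi_p$, gives
\[
h_n \;\leq\; \sum_y P^n(x, y) \log P^n(x, y) + \log \chi_p.
\]
Dividing by $n$, letting $n \to \infty$, and using the superadditivity identity $h_n/n \leq \lim_m h_m/m$ then produces $h_n/n \leq \lim_m \tfrac{1}{m}\sum_y P^m(x, y) \log P^m(x, y)$ for every $n$ and every $p < p_c$. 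Monotone convergence as $p \uparrow p_c$ (valid since $-\log \tau_p \searrow -\log \tau_{p_c}$ with integrable upper bound $d(X_0, X_n) \log(1/p_0)$ for any $p_0 \in (0, p_c)$) finishes this step.

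The almost-sure statement \eqref{eq:Schramm_quenched2} is where I expect the main difficulty. The plan is to apply Kingman's subadditive ergodic theorem (in Liggett's formulation) to $Y_n := -\log \tau_{p_c}(X_0, X_n)$. Subadditivity $Y_{n+m} \leq Y_n + (-\log \tau_{p_c}(X_n, X_{n+m}))$ is once again FKG; integrability $\E Y_1 < \infty$ follows from $\tau_{p_c}(x, y) \geq p_c^{d(x,y)}$ combined with $p_c > 0$ (automatic on locally finite transitive graphs) and the hypothesis $\E[d(X_0, X_1)] < \infty$. The key observation, which supplies Liggett's stationarity hypothesis despite $(X_n)$ itself not being stationary, is that $\Gamma$-invariance of $P$ makes the distribution of the orbit $[(X_n, X_{n+k})]_\Gamma \in \Gamma \backslash V^2$ independent of $n$ (with joint-in-$n$ stationarity coming from the fact that this orbit process is itself Markov), so the $\Gamma$-invariant observable $-\log \tau_{p_c}(X_n, X_{n+k})$ yields a stationary sequence in $n$ for each fixed $k$. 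Kingman's theorem then produces $Y_n/n \to L$ a.s.\ with $\E L = \gamma_* := \lim_n \E Y_n/n$; the matching pointwise bound $L \leq \gamma_*$ a.s., obtained by applying Birkhoff's ergodic theorem to these same stationary sequences together with the subadditivity of $Y_n$, pins $L$ down to $\gamma_*$, and combined with \eqref{eq:Schramm_quenched1} this produces \eqref{eq:Schramm_quenched2}.
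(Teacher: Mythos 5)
Your proposal is correct and follows essentially the same route as the paper's proof: Harris--FKG plus $\Gamma$-invariance for super(multi/addi)tivity, the bound $\E[\tau_p(X_0,X_n)]\leq \chi_p\sup_y P^n(x,y)$ (your Gibbs-inequality step is the paper's Jensen step in disguise), Fekete's lemma, left-continuity in $p$, and Kingman for the almost-sure statement. The only divergence is in the final step, where you justify the stationarity/ergodicity hypotheses for Kingman by hand via the $\Gamma$-orbits of $(X_{nk},X_{(n+1)k})$ (which are in fact i.i.d., so your SLLN argument pins down the limit) rather than citing ergodicity of random walk in i.i.d.\ random scenery as the paper does; this is a sound, self-contained substitute.
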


\begin{remark}
The limits appearing on the right-hand sides of \eqref{eq:Schramm_annealed}, \eqref{eq:Schramm_quenched1} and \eqref{eq:Schramm_quenched2} exist by Fekete's Lemma. If $P$ is reversible then the right hand side of \eqref{eq:Schramm_annealed} is its spectral radius, so that Schramm's Lemma follows as a special case of \eqref{eq:Schramm_annealed} by taking $X$ to be the simple random walk on $G$. The limit appearing on the right-hand side of \eqref{eq:Schramm_quenched1} and \eqref{eq:Schramm_quenched2} is exactly the negative of the Avez entropy of $X$; if $X$ is reversible and has $\E [d(X_0,X_1)]<\infty$ then it has non-zero Avez entropy if and only if it has positive asymptotic speed \cite[Theorem 14.20]{LP:book}.
\end{remark}

\begin{remark}
The same proof also yields analogous bounds for the critical free-boundary-condition Ising model.
\end{remark}

\begin{remark}
Despite the excitement generated by Schramm's initial discovery of the inequality \eqref{eq:Schramm_original}, as of yet it has failed to lead to much further progress besides the aforementioned work of Kozma. In fact, we suspect that the bound \eqref{eq:Schramm_original} continues to hold \emph{at $p_u$} for certain transitive nonamenable graphs, which would partly explain this inefficacy. We present our generalization, together with our new proof, primarily as a matter of historical interest.
\end{remark}

\begin{proof}
By the Harris-FKG inequality, we have that
\begin{equation}
\label{eq:Xsubadd}
\tau_p(X_0,X_{n+m})\geq \tau_p(X_0,X_n)\tau_p(X_n,X_{n+m})
\end{equation}
for every $n,m\geq0$. Since $\Gamma$ is transitive and the transition probabilites $P$ are $\Gamma$-invariant, we have that
\[
\E\left[\tau_p(X_0,X_n)\tau_p(X_n,X_{n+m})\right]= \E\left[\tau_p(X_0,X_n) \right] \E\left[\tau_p(X_0,X_m)\right].
\]
We deduce that the sequences 
$\log \E\left[\tau_p(X_0,X_n)\right]$  and $\E\left[\log \tau_p(X_0,X_n)\right]$ 
are both superadditive. Furthermore, we have as before that $\tau_p(x,y)$ is left-continuous in $p$ for each $x,y\in V$, and it follows by dominated convergence that $\E\left[\tau_p(X_0,X_n)\right]$ is left-continuous in $p$ for each $n\geq 1$. Similarly, since $\tau_p(x,y) \geq p^{d(x,y)}$ for every $x,y\in V$, it follows by dominated convergence that $\E\left[\log \tau_p(X_0,X_n)\right]$ is left-continuous in $p$ for each $n\geq 1$ under the assumption that $\E[d(X_0,X_1)]<\infty$, which implies that $\E[d(X_0,X_n)]<\infty$ for every $n\geq 1$.

Now, observe that if $p<p_c$ then
\begin{align}
\label{eq:SchrammKeyObservation}
\E\left[\frac{\tau_p(X_0,X_n)}{P^n(X_0,X_n)}\right] =  \sum_{x\in V}\tau_p(0,x) \mathbbm{1}\left[ P^n(0,x) >0 \right] \leq \chi_p.
\end{align}
Thus, by \eqref{eq:SchrammKeyObservation} and Fekete's Lemma, we have that
\begin{align*}
\sup_{n\geq1} \E\left[\tau_p(X_0,X_n)\right]^{1/n} &= \lim_{n\to\infty}\E\left[\tau_p(X_0,X_n)\right]^{1/n} \\&\leq \limsup_{n\to\infty} \left[\chi_p \sup_{y\in V} P^n(x,y) \right]^{1/n}=\lim_{n\to\infty} \left[ \sup_{y\in V} P^n(x,y) \right]^{1/n}
\end{align*}
for every $p<p_c$. The claimed inequality \cref{eq:Schramm_annealed} follows by left-continuity of $\E[\tau_p(X_0,X_n)]$. 

Now suppose that $\E[d(X_0,X_1)]<\infty$. Jensen's inequality implies that
\[
\E\left[\log \tau_p(X_0,X_n)\right]\leq \log \E\left[\frac{\tau_p(X_0,X_n)}{P^n(X_0,X_n)}\right] +  \E\left[ \log P^n(X_0,X_n)\right] \leq \log \chi_p +  \E\left[ \log P^n(X_0,X_n)\right],
\]
and it follows by Fekete's Lemma that
\begin{align*}
 \sup_{n\geq1}\frac{1}{n}\E\left[\log \tau_p(X_0,X_n)\right] = \lim_{n\to \infty}\frac{1}{n} \E\left[\log \tau_p(X_0,X_n)\right] \leq \lim_{n\to\infty} \frac{1}{n}\E\left[ \log P^n(X_0,X_n)\right]
\end{align*}
for every $p<p_c$. The inequality \cref{eq:Schramm_quenched1} follows from this together with the left-continuity of $\E\left[\log \tau_p(X_0,X_n)\right]$. The almost sure equality \eqref{eq:Schramm_quenched2} follows from \eqref{eq:Schramm_quenched1}, \eqref{eq:Xsubadd}, ergodicity of random walk in i.i.d.\ random scenery (see e.g.\ \cite[Theorem 4.6]{AL07}), and Kingman's subadditive ergodic theorem.
\end{proof}

\subsection*{Acknowledgements} We thank Aran Raoufi for help with some references and for spotting a typo, and thank the anonymous referee for suggesting various minor improvements.

\footnotesize{
  \bibliographystyle{abbrv}
  \bibliography{unimodularthesis}

\begin{thebibliography}{10}

\bibitem{MR678000}
M.~Aizenman.
\newblock Geometric analysis of {$\varphi ^{4}$}\ fields and {I}sing models.
  {I}, {II}.
\newblock {\em Comm. Math. Phys.}, 86(1):1--48, 1982.

\bibitem{aizenman1987sharpness}
M.~Aizenman and D.~J. Barsky.
\newblock Sharpness of the phase transition in percolation models.
\newblock {\em Communications in Mathematical Physics}, 108(3):489--526, 1987.

\bibitem{MR894398}
M.~Aizenman, D.~J. Barsky, and R.~Fern\'andez.
\newblock The phase transition in a general class of {I}sing-type models is
  sharp.
\newblock {\em J. Statist. Phys.}, 47(3-4):343--374, 1987.

\bibitem{MR3306602}
M.~Aizenman, H.~Duminil-Copin, and V.~Sidoravicius.
\newblock Random currents and continuity of {I}sing model's spontaneous
  magnetization.
\newblock {\em Comm. Math. Phys.}, 334(2):719--742, 2015.

\bibitem{MR857063}
M.~Aizenman and R.~Fern\'andez.
\newblock On the critical behavior of the magnetization in high-dimensional
  {I}sing models.
\newblock {\em J. Statist. Phys.}, 44(3-4):393--454, 1986.

\bibitem{MR762034}
M.~Aizenman and C.~M. Newman.
\newblock Tree graph inequalities and critical behavior in percolation models.
\newblock {\em J. Statist. Phys.}, 36(1-2):107--143, 1984.

\bibitem{AL07}
D.~Aldous and R.~Lyons.
\newblock Processes on unimodular random networks.
\newblock {\em Electron. J. Probab.}, 12:no. 54, 1454--1508, 2007.

\bibitem{MR1127713}
D.~J. Barsky and M.~Aizenman.
\newblock Percolation critical exponents under the triangle condition.
\newblock {\em Ann. Probab.}, 19(4):1520--1536, 1991.

\bibitem{BLPS99b}
I.~Benjamini, R.~Lyons, Y.~Peres, and O.~Schramm.
\newblock Critical percolation on any nonamenable group has no infinite
  clusters.
\newblock {\em Ann. Probab.}, 27(3):1347--1356, 1999.

\bibitem{1707.00520}
H.~Duminil-Copin.
\newblock Lectures on the {I}sing and {P}otts models on the hypercubic lattice.
\newblock 2017.

\bibitem{1705.07978}
H.~Duminil-Copin, A.~Raoufi, and V.~Tassion.
\newblock Exponential decay of connection probabilities for subcritical
  {V}oronoi percolation in $\mathbb{R}^d$.
\newblock 2017.

\bibitem{1705.03104}
H.~Duminil-Copin, A.~Raoufi, and V.~Tassion.
\newblock Sharp phase transition for the random-cluster and {P}otts models via
  decision trees.
\newblock 2017.

\bibitem{duminil2015new}
H.~Duminil-Copin and V.~Tassion.
\newblock A new proof of the sharpness of the phase transition for {B}ernoulli
  percolation and the {I}sing model.
\newblock {\em Communications in Mathematical Physics}, pages 1--21, 2015.

\bibitem{MR1544613}
M.~Fekete.
\newblock {\"U}ber die {V}erteilung der {W}urzeln bei gewissen algebraischen
  {G}leichungen mit ganzzahligen {K}oeffizienten.
\newblock {\em Math. Z.}, 17(1):228--249, 1923.

\bibitem{grimmett2010percolation}
G.~R. Grimmett.
\newblock Percolation (grundlehren der mathematischen wissenschaften).
\newblock 2010.

\bibitem{heydenreich2015progress}
M.~Heydenreich and R.~van~der Hofstad.
\newblock Progress in high-dimensional percolation and random graphs.

\bibitem{Hutchcroft2016944}
T.~Hutchcroft.
\newblock Critical percolation on any quasi-transitive graph of exponential
  growth has no infinite clusters.
\newblock {\em Comptes Rendus Mathematique}, 354(9):944 -- 947, 2016.

\bibitem{Hutchcroftnonunimodularperc}
T.~Hutchcroft.
\newblock Non-uniqueness and mean-field criticality for percolation on
  nonunimodular transitive graphs.
\newblock 2017.

\bibitem{1709.10515}
T.~Hutchcroft.
\newblock Self-avoiding walk on nonunimodular transitive graphs.
\newblock 2017.

\bibitem{kozma2011percolation}
G.~Kozma.
\newblock Percolation on a product of two trees.
\newblock {\em The Annals of Probability}, pages 1864--1895, 2011.

\bibitem{LP:book}
R.~Lyons and Y.~Peres.
\newblock {\em Probability on Trees and Networks}.
\newblock Cambridge University Press, New York, 2016.
\newblock Available at \url{http://pages.iu.edu/~rdlyons/}.

\bibitem{MR0010315}
L.~Onsager.
\newblock Crystal statistics. {I}. {A} two-dimensional model with an
  order-disorder transition.
\newblock {\em Phys. Rev. (2)}, 65:117--149, 1944.

\bibitem{MR1770624}
Y.~Peres.
\newblock Percolation on nonamenable products at the uniqueness threshold.
\newblock {\em Ann. Inst. H. Poincar\'e Probab. Statist.}, 36(3):395--406,
  2000.

\bibitem{1606.03763}
A.~Raoufi.
\newblock A note on continuity of magnetization at criticality for the
  ferromagnetic {I}sing model on amenable quasi-transitive graphs with
  exponential growth.
\newblock 2016.

\bibitem{MR1833805}
R.~H. Schonmann.
\newblock Multiplicity of phase transitions and mean-field criticality on
  highly non-amenable graphs.
\newblock {\em Comm. Math. Phys.}, 219(2):271--322, 2001.

\bibitem{MR588470}
A.~D. Sokal.
\newblock A rigorous inequality for the specific heat of an {I}sing or
  {$\varphi ^{4}$}\ ferromagnet.
\newblock {\em Phys. Lett. A}, 71(5-6):451--453, 1979.

\bibitem{MR0051740}
C.~N. Yang.
\newblock The spontaneous magnetization of a two-dimensional {I}sing model.
\newblock {\em Physical Rev. (2)}, 85:808--816, 1952.

\end{thebibliography}
  }

\end{document}